\newtheorem{theorem}{Theorem}
\newtheorem{lemma}[theorem]{Lemma}
\theoremstyle{definition}
\DeclareMathOperator{\im}{Im}
\DeclareMathOperator{\ind}{\mathds{1}}
\newcommand{\pr}{\mathds{P}}
\newcommand{\C}{\mathds{C}}
\newcommand{\R}{\mathds{R}}
\newcommand{\ph}{\varphi}
\newcommand{\expr}[1]{\left( #1 \right)}
\newcommand{\bb}{{\mathrm{b}}}
\newcommand{\oo}{{\mathrm{o}}}
\newcommand{\si}{{\mathrm{s}}}
\renewcommand{\ge}{\geqslant}
\newcommand{\formula}[2][nolabel]%
{%
 \ifthenelse{\equal{#1}{nolabel}}%
 {\begin{align*} #2 \end{align*}}%
 {%
  \ifthenelse{\equal{#1}{}}%
  {\begin{align} #2 \end{align}}%
  {\begin{align} \label{#1} #2 \end{align}}%
 }%
}
\begin{document}

\title[Random walks are determined\ldots]{Random walks are determined \\ by their trace on the positive half-line}
\author{Mateusz Kwaśnicki}
\thanks{Work supported by the Polish National Science Centre (NCN) grant no.\@ 2015/19/B/ST1/01457}
\address{Mateusz Kwaśnicki \\ Faculty of Pure and Applied Mathematics \\ Wrocław University of Science and Technology \\ ul. Wybrzeże Wyspiańskiego 27 \\ 50-370 Wrocław, Poland}
\email{mateusz.kwasnicki@pwr.edu.pl}
\keywords{Random walk; Lévy process; Wiener--Hopf factorisation; Nevanlinna class}
\subjclass[2010]{60G50, 60G51, 45E10, 30H15}

\begin{abstract}
We prove that the law of a random walk $X_n$ is determined by the one-dimensional distributions of $\max(X_n, 0)$ for $n = 1, 2, \ldots$\,, as conjectured recently by Loïc Chaumont and Ron Doney. Equivalently, the law of $X_n$ is determined by its upward space-time Wiener--Hopf factor. Our methods are complex-analytic.
\end{abstract}

\maketitle

%
%

\section{Introduction and main result}

In this note we give an affirmative answer to the question posed by Loïc Chaumont and Ron Doney in~\cite{bib:cd17}, inspired by Vincent Vigon's conjecture in~\cite{bib:v02}. The main result was previously stated without proof in a more general form in~\cite{bib:ou90}, and an erroneous proof was given in~\cite{bib:u94}.

A random walk $X_n$ is said to be \emph{non-degenerate} if $\pr(X_n > 0) \ne 0$. Similarly, a finite signed Borel measure $\mu$ on $\R$ is said to be \emph{non-degenerate} if the restriction of $\mu$ to $(0, \infty)$ is a non-zero measure.

\begin{theorem}
\label{thm:wh}
If $X_n$ and $Y_n$ are non-degenerate random walks such that $\max(X_n, 0)$ and $\max(Y_n, 0)$ are equal in distribution for all $n = 1, 2, \ldots$\,, then $X_n$ and $Y_n$ are equal in distribution for $n = 1, 2, \ldots$

More generally, if $\mu$ and $\nu$ are non-degenerate finite signed Borel measures and their $n$-fold convolutions $\mu^{*n}$ and $\nu^{*n}$ agree on $(0, \infty)$ for $n = 1, 2, \ldots$\,, then $\mu = \nu$.
\end{theorem}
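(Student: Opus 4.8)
The plan is to pass to Fourier transforms and to read the hypothesis as the statement that the \emph{upward space-time Wiener--Hopf factors} of $\mu$ and $\nu$ coincide, and then to show by a complex-analytic rigidity argument that the remaining (downward) factors must coincide as well. Write $\hat\mu(\xi) = \int_\R e^{i\xi x}\,\mu(dx)$; since $\mu$ is finite this is bounded and continuous on $\R$, and for $|z|$ smaller than $1/\max(\norm{\mu},\norm{\nu})$ the measure $R_z = \sum_{n=1}^\infty z^n \mu^{*n}$ converges in total variation, with $\widehat{R_z}(\xi) = z\hat\mu(\xi)/(1 - z\hat\mu(\xi))$, so that $1/(1 - z\hat\mu(\xi))$ is well defined and bounded away from $0$ and $\infty$. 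It suffices to prove $\hat\mu = \hat\nu$.

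First I would split the logarithm of the resolvent generating function according to the sign of the increments, that is, write down the Spitzer--Baxter (logarithmic space-time Wiener--Hopf) factorisation
\formula{-\log(1 - z\hat\mu(\xi)) = \sum_{n=1}^\infty \frac{z^n}{n}\,\widehat{\mu^{*n}}(\xi) = \Psi^+_\mu(z,\xi) + \Psi^0_\mu(z) + \Psi^-_\mu(z,\xi),}
where the three terms collect the contributions of $\mu^{*n}|_{(0,\infty)}$, of the atom $\mu^{*n}(\{0\})$, and of $\mu^{*n}|_{(-\infty,0)}$, respectively. For small $z$ each series converges; $\Psi^+_\mu(z,\cdot)$ is the boundary value of a bounded function analytic in the upper half-plane $\{\im\xi>0\}$ and continuous up to $\R$ (because $e^{i\xi x}\to 0$ as $\im\xi\to+\infty$ for $x>0$), $\Psi^-_\mu(z,\cdot)$ is the analogous object in the lower half-plane $\{\im\xi<0\}$, and $\Psi^0_\mu(z)$ does not depend on $\xi$. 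The decisive observation is that the upward factor $\Psi^+_\mu(z,\xi) = \sum_n \tfrac{z^n}{n}\int_{(0,\infty)} e^{i\xi x}\,\mu^{*n}(dx)$ is built solely from $\mu^{*n}|_{(0,\infty)}$, which by hypothesis equals $\nu^{*n}|_{(0,\infty)}$; hence $\Psi^+_\mu = \Psi^+_\nu$, and subtracting the two factorisations gives, for real $\xi$ and small real $z$,
\formula{\log\frac{1 - z\hat\nu(\xi)}{1 - z\hat\mu(\xi)} = \Delta^0(z) + \Delta^-(z,\xi),}
with $\Delta^- = \Psi^-_\mu - \Psi^-_\nu$ and $\Delta^0 = \Psi^0_\mu - \Psi^0_\nu$. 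Thus $Q(z,\xi) := (1 - z\hat\nu(\xi))/(1 - z\hat\mu(\xi))$ extends to a bounded, non-vanishing analytic function of $\xi$ in the lower half-plane, tending to $e^{\Delta^0(z)}$ at infinity and satisfying, for real $z$, the conjugation symmetry $\overline{Q(z,\xi)} = Q(z,-\xi)$ on $\R$; the same holds for $1/Q$, so $Q(z,\cdot)$ is an outer function of bounded type in $\{\im\xi<0\}$.

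The heart of the matter — and, I expect, the main obstacle — is to upgrade this one-sided analyticity to the statement that $Q(z,\cdot)$ is \emph{constant} in $\xi$. Boundedness of $Q$ and $1/Q$ in a single half-plane does not suffice, since an outer function with non-constant boundary modulus is non-constant; the rigidity must therefore exploit the full structure: the reflection symmetry, the limit at infinity, and above all the fact that the factorisation holds simultaneously for \emph{all} small $z$ with $\mu,\nu$ fixed. For fixed real $\xi$ the left-hand side $Q(z,\xi)$ is a Möbius function of $z$ with a pole at $z = 1/\hat\mu(\xi)$, whereas the right-hand side $\exp(\Delta^0(z) + \Delta^-(z,\xi))$ is the exponential of a power series in $z$; matching the analytic continuations in $z$ should force strong constraints on the Nevanlinna-class function $\Delta^-(z,\cdot)$. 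I would combine this with a Liouville-type argument — writing $\Delta^-(z,\xi) = \int_{(-\infty,0)} e^{i\xi x}\,\Lambda_z(dx)$ for a finite signed measure $\Lambda_z$ on $(-\infty,0)$ and showing that the only way these boundary constraints are compatible with $Q$ being a ratio of transforms of finite measures is $\Lambda_z\equiv 0$ — to conclude that $\Delta^-(z,\cdot)\equiv 0$ and hence $Q(z,\cdot)\equiv e^{\Delta^0(z)} =: q(z)$. Controlling the boundary behaviour is the delicate point: atoms prevent $\hat\mu,\hat\nu$ from decaying along $\R$, so the argument must remain within the Nevanlinna class rather than any Hardy space built on decay, and it is precisely here that the previously published proof is reported to have failed.

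Once $Q(z,\cdot)\equiv q(z)$, the conclusion is immediate. The identity $1 - z\hat\nu(\xi) = q(z)\,(1 - z\hat\mu(\xi))$ gives $\hat\nu(\xi) = q(z)\,\hat\mu(\xi) + (1 - q(z))/z$. Choosing $\xi_1,\xi_2$ with $\hat\mu(\xi_1)\ne\hat\mu(\xi_2)$ — possible because $\mu$ is non-degenerate, so $\hat\mu$ is non-constant (were $\hat\mu$ constant, $\mu$ would be a multiple of $\delta_0$ and carry no mass on $(0,\infty)$) — and subtracting the two instances yields $q(z) = (\hat\nu(\xi_1) - \hat\nu(\xi_2))/(\hat\mu(\xi_1) - \hat\mu(\xi_2))$, independent of $z$. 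Since $q$ is then constant in $z$ while $(1 - q(z))/z$ must stay bounded as $z\to 0$, we get $q\equiv 1$, whence $\hat\mu = \hat\nu$ and therefore $\mu = \nu$. This final step is where non-degeneracy is indispensable.
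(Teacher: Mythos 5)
The proposal is a framework plus an honest admission that the central step is missing, and that central step is the entire content of the theorem. Your setup is sound: the Spitzer--Baxter splitting of $-\log(1-z\hat\mu(\xi))$ into $\Psi^+ + \Psi^0 + \Psi^-$ is legitimate for small $|z|$, the hypothesis does give $\Psi^+_\mu = \Psi^+_\nu$, the function $Q(z,\cdot) = \exp(\Delta^0(z)+\Delta^-(z,\cdot))$ is indeed bounded with bounded inverse on $\overline{\C_-}$, and your closing Möbius argument (deducing $q(z)$ is constant in $z$ from non-degeneracy of $\mu$, then $q\equiv 1$ from boundedness of $(1-q(z))/z$ at $z=0$) is correct \emph{given} that $Q(z,\cdot)$ is constant in $\xi$. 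But that last hypothesis is exactly what you do not prove: the passage from ``$\Delta^-(z,\cdot)$ is the transform of a finite signed measure $\Lambda_z$ on $(-\infty,0)$ and $e^{\Delta^-}$ satisfies various boundary and symmetry constraints'' to ``$\Lambda_z\equiv 0$'' is announced (``I would combine this with a Liouville-type argument\ldots matching the analytic continuations in $z$ should force strong constraints'') but never carried out. No concrete mechanism is given, and none of the listed ingredients (reflection symmetry, limit at infinity, outerness) suffices: a non-constant outer function with these symmetries is easy to write down, and the obstruction the paper identifies in the earlier erroneous proof --- singular inner behaviour accumulating at the real axis, as in $\exp(i/z)$ --- is precisely the kind of object your sketch has no tool to exclude.

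For comparison, the paper avoids logarithms and generating functions altogether. A purely combinatorial induction (Lemma~\ref{lem:mupm}, via the binomial theorem) converts the hypothesis into the single condition $(\eta_+^{*n}*\eta_-)_+ = 0$ for all $n$, where $\eta_+=\mu_+$ and $\eta_-=\mu_--\nu_-$; Lemma~\ref{lem:wh} then shows such an $\eta$ must have $\eta_-=0$. The rigidity there comes from comparing the canonical Nevanlinna factorisations of $\ph^n$ for \emph{all} $n$ simultaneously: the singular measure and the exponential-type constant of $\ph^n$ scale linearly in $n$, while their negative parts are dominated by the fixed data of $\hat\eta_-$, so dividing by $n$ kills them; then Painlev\'e's theorem and Liouville finish. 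This ``take $n$-th roots of the singular part'' device is the idea your sketch lacks. You gesture at its analogue (``the factorisation holds simultaneously for all small $z$''), but using the $z$-family in place of the $n$-family would still require a quantitative argument you have not supplied. As written, the proposal restates the problem in Wiener--Hopf language and solves only the easy endgame.
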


Following~\cite{bib:cd17}, we remark that various reformulations of the above result are possible. A non-degenerate random walk $X_n$ is determined by any of the following objects:
\begin{itemize}
\item The law of the ascending ladder process $(T_k, S_k)$; here $S_k = X_{T_k}$ is the $k$-th running maximum of the random walk.
\item The upward space-time Wiener--Hopf factor $\Phi_+(q, \xi)$, that is, the characteristic function of $(T_1, S_1)$.
\item The distributions of the running maxima $\max(0, X_1, X_2, \ldots, X_n)$ for $n = 1, 2, \ldots$
\end{itemize}
Theorem~\ref{thm:wh} clearly implies that a~non-degenerate Lévy process $X_t$ is determined by any of the following objects:
\begin{itemize}
\item The distributions of $\max(X_t, 0)$ for all $t > 0$ (or even for $t = 1, 2, \ldots$).
\item The law of the ascending ladder process $(T_t, S_t)$.
\item The upward space-time Wiener--Hopf factor $\kappa_+(q, \xi)$, that is, the characteristic exponent of $(T_t, S_t)$.
\item The distributions of the running suprema $\sup\{X_s : s \in [0, t]\}$ for all $t > 0$.
\end{itemize}
For further discussion, we again refer to~\cite{bib:cd17}, where Theorem~\ref{thm:wh} was proved under various relatively mild additional conditions. For related research, see~\cite{bib:cd17,bib:lms76,bib:o85,bib:ou90,bib:u90,bib:u94} and the references therein.

Theorem~\ref{thm:wh} was given without proof in~\cite{bib:ou90} in a more general form: Theorem~4 therein claims that $\mu = \nu$ if $\mu$ and $\nu$ are non-degenerate finite Borel measures on $\R$ and the restrictions of $\mu^{*n_k}$ and $\nu^{*n_k}$ to $(0, \infty)$ are equal for $k = 1, 2, \ldots$\,, where $n_1 = 1$ and $n_2 - 1, n_3 - 1, \ldots$ are distinct and have no common divisor other than $1$. Noteworthy, this result is stated for measures on the Euclidean space of arbitrary dimension, and their restrictions to the half-space. A proof is given in~\cite{bib:u94} under the additional condition $n_2 = 2$, and only in dimension one. However, the argument in~\cite{bib:u94} contains a gap, that we describe at the end of this article.

%
%

\section{Proof}

All measures considered below are finite, signed Borel measures. For a measure $\mu$ on $\R$, we denote the restrictions of $\mu$ to $(0, \infty)$ and $(-\infty, 0]$ by $\mu_+ = \ind_{(0, \infty)} \mu$ and $\mu_- = \ind_{(-\infty, 0]} \mu$. This should not be confused with the Hahn decomposition of $\mu$ into the positive and negative part. By $\mu^{*n}$ we denote the $n$-fold convolution of $\mu$, and we define $\mu^{*0}$ to be the Dirac measure $\delta_0$. For brevity, we write $\mu_\pm^{*n} = (\mu_\pm)^{*n}$, as opposed to $(\mu^{*n})_\pm$. We denote the characteristic function of a measure $\mu$ by $\hat\mu$:
\formula{
 \hat\mu(z) & = \int_\R e^{i z x} \mu(dx)
}
for $z \in \R$, and also for those $z \in \C$ for which the integral converges. We recall that $\hat\mu_+$ is a bounded holomorphic function in the upper complex half-plane $\C_+ = \{z \in \C : \im z > 0\}$, continuous on the boundary. Similarly, $\hat\mu_-$ is a bounded holomorphic function on the lower complex half-plane $\C_- = \{z \in \C : \im z < 0\}$.

\begin{lemma}
\label{lem:mupm}
Suppose that $\mu, \nu$ are measures on $\R$ satisfying
\formula{
 (\mu^{*n})_+ & = (\nu^{*n})_+ && \text{for $n = 1, 2, \ldots, N$.}
}
Then $\mu_+ = \nu_+$ and
\formula[eq:mupm]{
 (\mu_+^{*n} * \mu_-^{*k})_+ & = (\nu_+^{*n} * \nu_-^{*k})_+ && \text{for $n = 1, \ldots, N - 1$ and $k = 1, 2, \ldots$}
}
\end{lemma}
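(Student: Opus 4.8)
The plan is to convert the hypothesis into finitely many linear relations among the positive parts of the mixed convolutions, and then to derive the infinitely many identities \eqref{eq:mupm} from them by a support argument. Since convolution of finite signed measures is commutative, the binomial theorem gives $\mu^{*n}=\sum_{j=0}^{n}\binom{n}{j}\mu_+^{*j}*\mu_-^{*(n-j)}$, and likewise for $\nu$. The measure $\mu_-^{*n}$ is supported on $(-\infty,0]$, so it contributes nothing to the positive part, and restricting to $(0,\infty)$ removes the $j=0$ term. The case $n=1$ of the hypothesis is precisely $\mu_+=\nu_+$. Writing $D_{j,k}=(\mu_+^{*j}*\mu_-^{*k})_+-(\nu_+^{*j}*\nu_-^{*k})_+$ and subtracting the two expansions, the assumption $(\mu^{*n})_+=(\nu^{*n})_+$ becomes, for each $n=1,\dots,N$, the relation $\sum_{j=1}^{n-1}\binom{n}{j}D_{j,n-j}=0$; the $j=n$ term drops out because $\mu_+=\nu_+$ forces $D_{n,0}=0$. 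These relations involve only indices with $j+k\le N$, whereas \eqref{eq:mupm} ranges over all $k$, and bridging this gap is the heart of the matter.

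The decoupling in $k$ comes from a factorisation. Using $\mu_+=\nu_+$ I rewrite $D_{n,k}=(\mu_+^{*n}*(\mu_-^{*k}-\nu_-^{*k}))_+$ and factor $\mu_-^{*k}-\nu_-^{*k}=(\mu_--\nu_-)*\sum_{i=0}^{k-1}\mu_-^{*i}*\nu_-^{*(k-1-i)}$. Every factor on the right is supported on $(-\infty,0]$, so if the measure $E_n:=\mu_+^{*n}*(\mu_--\nu_-)$ is itself supported on $(-\infty,0]$, then so is its convolution with the remaining factors, whence $D_{n,k}=0$ for \emph{every} $k$ at once. Since $(E_n)_+=D_{n,1}$, the condition $\supp E_n\subseteq(-\infty,0]$ is exactly $D_{n,1}=0$. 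Thus the whole of \eqref{eq:mupm} reduces to the single-index claim $D_{m,1}=0$ for $m=1,\dots,N-1$.

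This last statement I would prove by induction on $m$. Suppose $D_{j,1}=0$ for all $j<m$; by the previous paragraph this upgrades to $D_{j,k}=0$ for all $j<m$ and all $k\ge1$. Applying the relation with $n=m+1$, which is available because $m+1\le N$, gives $\sum_{j=1}^{m}\binom{m+1}{j}D_{j,m+1-j}=0$. Every term with $j<m$ has second index $m+1-j\ge1$ and hence vanishes, leaving $(m+1)D_{m,1}=0$, i.e.\ $D_{m,1}=0$; the base case $m=1$ is the relation $2D_{1,1}=0$ coming from $n=2$. Feeding the identities $D_{m,1}=0$ back into the factorisation then yields $D_{n,k}=0$ for all $n\le N-1$ and all $k\ge1$, which is \eqref{eq:mupm}. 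I expect the only genuine obstacle to be this decoupling step: a purely linear-algebraic reading of the relations leaves far more unknowns $D_{j,k}$ than equations, and it is the elementary fact that convolving measures supported on $(-\infty,0]$ cannot create mass on $(0,\infty)$ that both collapses the infinitely many indices $k$ onto $k=1$ and lets the finitely many available relations close the induction.
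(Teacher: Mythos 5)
Your proof is correct, and its overall architecture matches the paper's: expand $\mu^{*n}=(\mu_++\mu_-)^{*n}$ by the binomial theorem, note that the $j=0$ and $j=n$ terms drop out, and run an induction in which everything but the top term $D_{m,1}$ of the $n=m+1$ relation is already known to vanish, so the relation forces $D_{m,1}=0$. The one place where you genuinely diverge is the passage from $k=1$ to arbitrary $k$. The paper handles this by an inner induction on $k$, repeatedly applying the projection identity $(\pi*\sigma_-)_+=(\pi_+*\sigma_-)_+$ in an eight-step chain of equalities that shuttles between $\mu_-$ and $\nu_-$. You instead use $\mu_+=\nu_+$ together with the telescoping factorisation $\mu_-^{*k}-\nu_-^{*k}=(\mu_--\nu_-)*\sum_{i=0}^{k-1}\mu_-^{*i}*\nu_-^{*(k-1-i)}$ to write $D_{n,k}$ as the positive part of $E_n$ convolved with a measure carried by $(-\infty,0]$, where $E_n=\mu_+^{*n}*(\mu_--\nu_-)$; then $D_{n,1}=0$, which says exactly that $\supp E_n\subseteq(-\infty,0]$, collapses all $k\ge 1$ at once. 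This is a clean simplification: it replaces the inner induction by a single support observation, and it makes explicit why finitely many linear relations among the $D_{j,k}$ suffice. The underlying fact exploited in both arguments is the same — convolving measures carried by $(-\infty,0]$ cannot create mass on $(0,\infty)$ — so the two proofs are interchangeable, yours being slightly shorter and the paper's slightly more self-contained in that it never leaves the language of restricted convolutions.
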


\begin{proof}
We proceed by induction with respect to $N$. For $N = 1$ the result is trivial: we have $\mu_+ = (\mu^{*1})_+ = (\nu^{*1})_+ = \nu_+$. Suppose that the assertion of the lemma holds true for some $N$, and suppose that $(\mu^{*n})_+ = (\nu^{*n})_+$ for $n = 1, 2, \ldots, N, N + 1$. By the induction hypothesis, formula~\eqref{eq:mupm} holds for $n = 0, 1, \ldots, N - 1$ and $k = 1, 2, \ldots$\,, and we have $\mu_+ = \nu_+$. Therefore, we only need to prove~\eqref{eq:mupm} for $n = N$ and $k = 1, 2, \ldots$

By the binomial theorem,
\formula{
 0 & = (\mu^{*N + 1} - \nu^{*N + 1})_+ = ((\mu_+ + \mu_-)^{*N + 1} - (\nu_+ + \nu_-)^{*N + 1})_+ \\
 & = \sum_{j = 0}^{N + 1} \binom{N + 1}{j} (\mu_+^{*j} * \mu_-^{*N + 1 - j} - \nu_+^{*j} * \nu_-^{*N + 1 - j})_+ .
}
We already know that $\mu_+^{*N + 1} = \nu_+^{*N + 1}$ and $(\mu_+^{*j} * \mu_-^{*N + 1 - j})_+ = (\nu_+^{*j} * \nu_-^{*N + 1 - j})_+$ for $j = 1, 2, \ldots, N - 1$. Furthermore, $(\mu_-^{*N + 1})_+ = 0 = (\nu_-^{*N + 1})_+$. It follows that all terms corresponding to $j \ne N$ in the above sum are zero. Thus,
\formula{
 0 & = \binom{N + 1}{N} (\mu_+^N * \mu_- - \nu_+^N * \nu_-)_+ ,
}
which proves~\eqref{eq:mupm} for $n = N$ and $k = 1$. The proof for $n = N$ and $k > 1$ proceeds again by induction: if~\eqref{eq:mupm} holds for $n = N$ and $k = 1, 2, \ldots, K$, then
\formula{
 (\mu_+^N * \mu_-^{*K + 1})_+ & = (\mu_+^N * \mu_-^{*K} * \mu_-)_+ = ((\mu_+^N * \mu_-^{*K})_+ * \mu_-)_+ \\
 & = ((\nu_+^N * \nu_-^{*K})_+ * \mu_-)_+ = (\nu_+^N * \nu_-^{*K} * \mu_-)_+ \\
 & = (\mu_+^N * \mu_- * \nu_-^{*K})_+ = ((\mu_+^N * \mu_-)_+ * \nu_-^{*K})_+ \\
 & = ((\nu_+^N * \nu_-)_+ * \nu_-^{*K})_+ = (\nu_+^N * \nu_- * \nu_-^{*K})_+ = (\nu_+^N * \nu_-^{*K + 1})_+ ;
}
we used the identity $(\pi * \sigma_-)_+ = (\pi_+ * \sigma_-)_+$ in second, fourth, sixth and eighth equalities, \eqref{eq:mupm} for $n = N$ and $k = K$ in the third one, \eqref{eq:mupm} for $n = N$ and $k = 1$ in the seventh one, and $\mu_+ = \nu_+$ in the fifth one. We conclude that~\eqref{eq:mupm} holds for $n = N$ and all $k = 1, 2, \ldots$\,, and the proof is complete.
\end{proof}

A holomorphic function $f$ on $\C_-$ is said to be of \emph{bounded type} (or belong to the \emph{Nevanlinna class}) if $\log |f(x)|$ has a harmonic majorant on $\C_-$. Equivalently, $f$ is of bounded type if it is a ratio of two bounded holomorphic functions on $\C_-$. We recall the following fundamental factorisation theorem for holomorphic functions on $\C_-$ which are bounded or of bounded type, and we refer to~\cite{bib:g07} for further details.

\begin{theorem}[{Theorem~II.5.5 and Corollary~II.5.7 in~\cite{bib:g07}}]
\label{thm:fact}
Let $f$ be a holomorphic function of bounded type on the lower complex half-plane, and suppose that $f$ is not identically zero. Let $\alpha_0$ be the multiplicity of the zero of $f$ at $z = -i$ (possibly $n_0 = 0$), and let $z_1, z_2, \ldots$ be the (finite or infinite) sequence of all zeros of $f$ in the lower complex half-plane, with corresponding multiplicities $\alpha_1, \alpha_2, \ldots$\, Then $f$ admits a factorisation
\formula[eq:fact]{
 f(z) & = f_\bb(z) f_\oo(z) f_\si(z)
}
(unique, up to multiplication of $f_\oo$ and $f_\si$ by a constant of modulus~$1$), with the folllowing factors. The function $f_\bb$ is a \emph{Blaschke product}, determined uniquely by the zeros of $f$:
\formula[eq:bb]{
 f_\bb(z) & = \expr{\frac{z + i}{z - i}}^{\!\alpha_0} \prod_j \expr{\frac{|1 + z_j^2|}{1 + z_j^2} \, \frac{z - z_j}{z - \bar{z}_j}}^{\!\alpha_j} .
}
The function $f_\oo$ is an \emph{outer function}, a holomorphic function determined uniquely up to multiplication by a constant of modulus~$1$ by the formula:
\formula[eq:oo]{
 |f_\oo(z)| & = \exp \! \expr{\frac{1}{\pi} \int_{-\infty}^\infty \frac{-\im z}{|z - x|^2} \, \log |f(x)| \, dx} .
}
Finally, the function $f_\si$ is a \emph{singular inner function}, a holomorphic function determined uniquely up to multiplication by a constant of modulus~$1$ by the expression:
\formula[eq:si]{
 |f_\si(z)| & = \exp \! \expr{a \im z - \frac{1}{\pi} \int_\R \frac{-\im z}{|z - x|^2} \, \lambda(dx)} ,
}
where $a \in \R$ is a constant and $\lambda$ is a signed measure, singular with respect to the Lebesgue measure.

Furthermore, for almost all $x \in \R$ with respect to both the Lebesgue measure and the measure $\lambda$, the limit $f(x)$ of $f(x + i y)$ as $y \to 0^-$ exists. This boundary limit $f(x)$ is non-zero almost everywhere with respect to the Lebesgue measure and zero almost everywhere with respect to $\lambda$. The symbol $f(x)$ used in the definition of the outer function $f_\oo$ refers precisely to this boundary limit. Additionally, we have $\sum_j \alpha_j |\im z_j| (1 + |z_j|^2)^{-1} < \infty$, $\int_{-\infty}^\infty (1 + x^2)^{-1} |\log |f(x)|| dx < \infty$ and $\int_\R (1 + x^2)^{-1} |\lambda|(dx) < \infty$, and any parameters $\alpha_j$, $z_j$, $a$, $\lambda$ and boundary values $|f(x)|$, $x \in \R$, which satisfy these conditions, correspond to some function $f$ of bounded type.

Finally, $f$ is a bounded holomorphic function in the lower complex half-plane if and only if $a \ge 0$, $\lambda$ is a non-negative measure and the boundary values $|f(x)|$ are bounded for $x \in \R$.
\end{theorem}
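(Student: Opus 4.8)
The plan is to transfer the problem to the unit disk by the Cayley transform and to invoke the classical canonical factorisation of functions in the Nevanlinna class $N(\mathbb{D})$, which is precisely what is developed in Chapter~II of \cite{bib:g07}. Concretely, I would use the conformal map $\phi(z) = \frac{z+i}{z-i}$, which carries $\C_-$ onto the unit disk $\mathbb{D}$, sends the distinguished point $z = -i$ to the centre $w = 0$, sends $z = \infty$ to the boundary point $w = 1$, and maps $\R$ onto $\partial\mathbb{D}$. Setting $g = f \circ \phi^{-1}$ with $\phi^{-1}(w) = -i\frac{1+w}{1-w}$, I would first observe that being of bounded type is conformally invariant: a ratio of two bounded holomorphic functions on $\C_-$ pulls back to a ratio of two bounded holomorphic functions on $\mathbb{D}$, so $g$ lies in $N(\mathbb{D})$ and is not identically zero. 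The disk theory then provides $g = c\, B\, S\, O$, with $B$ a Blaschke product over the zeros of $g$, $S$ a singular inner function attached to a finite signed singular measure on $\partial\mathbb{D}$, $O$ an outer function, and $c$ a unimodular constant.

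The second step is to translate each factor back to $\C_-$ via $\phi$ and to match it with the asserted formulae. The zero of $g$ at the centre $w = 0$ is exactly the zero of $f$ at $z = -i$ of multiplicity $\alpha_0$, and the disk factor $w^{\alpha_0}$ becomes $\left(\frac{z+i}{z-i}\right)^{\alpha_0}$; each remaining zero $z_j \in \C_-$ corresponds to $w_j = \phi(z_j) \in \mathbb{D}$, and the elementary Blaschke factor on the disk transforms, after fixing the unimodular normalisation, into $\frac{|1+z_j^2|}{1+z_j^2}\frac{z - z_j}{z - \bar z_j}$, yielding \eqref{eq:bb}; the summability condition $\sum_j(1 - |w_j|) < \infty$ translates into $\sum_j \alpha_j |\im z_j|(1 + |z_j|^2)^{-1} < \infty$. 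For the outer factor I would push forward the disk Poisson representation of $\log|O|$: the harmonic measure of $\partial\mathbb{D}$ seen from the centre pulls back to $\frac{1}{\pi}\frac{dx}{1+x^2}$ on $\R$, and the Poisson kernel of $\mathbb{D}$ becomes the half-plane kernel $\frac{-\im z}{\pi|z-x|^2}$ (with $-\im z > 0$ on $\C_-$), which gives \eqref{eq:oo} together with $\int(1+x^2)^{-1}|\log|f(x)||\,dx < \infty$; the boundary modulus $|f(x)|$ in \eqref{eq:oo} is identified with the radial limits of $g$ transported to the line.

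The delicate step, and the one I expect to require the most care, is the singular factor, where the boundary point $w = 1$ — the image of $z = \infty$ — must be isolated. Decomposing the signed singular measure $\sigma$ on $\partial\mathbb{D}$ as its restriction to $\partial\mathbb{D}\setminus\{1\}$ plus an atom of mass $a$ at $w = 1$, the restriction pushes forward to a finite signed singular measure $\lambda$ on $\R$ with $\int(1+x^2)^{-1}|\lambda|(dx) < \infty$ and produces the integral $-\frac{1}{\pi}\int_\R\frac{-\im z}{|z-x|^2}\lambda(dx)$ of \eqref{eq:si}. The atom sits exactly where the change of variables degenerates and cannot be absorbed into that integral; instead, using the identity $\frac{1+\phi(z)}{1-\phi(z)} = iz$, so that $\re\frac{1+\phi(z)}{1-\phi(z)} = -\im z$, one finds that the atom contributes precisely the linear summand $a\,\im z$ in \eqref{eq:si}. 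This is the structural reason the half-plane singular inner function carries a term with no integral analogue, and it is the main obstacle in making the transfer rigorous. Finally, I would read off the boundedness characterisation: $f$ is bounded on $\C_-$ iff $g \in H^\infty(\mathbb{D})$, which on the disk means the singular measure is nonnegative and the boundary modulus is bounded; since $\im z < 0$ forces $a \ge 0$ for $\exp(a\,\im z)$ to stay bounded, this transports to $a \ge 0$, $\lambda \ge 0$, and $|f(x)|$ bounded. Uniqueness of $f_\oo$ and $f_\si$ up to unimodular constants, and the exactness of $f_\bb$, are inherited from the corresponding uniqueness statements on $\mathbb{D}$.
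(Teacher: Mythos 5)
The paper does not prove this statement at all --- it is quoted verbatim from Garnett (Theorem~II.5.5 and Corollary~II.5.7 of \cite{bib:g07}) --- and your Cayley-transform reduction to the canonical Nevanlinna factorisation on the unit disk is precisely the classical argument underlying the cited result, so your proposal is correct and takes essentially the same route as the paper's (delegated) proof. Your key computations check out: the zero at $z=-i$ corresponds to the centre of the disk, the identity $1-|\phi(z_j)|^2 = 4|\im z_j|/|z_j-i|^2$ turns the disk Blaschke condition $\sum_j \alpha_j(1-|\phi(z_j)|)<\infty$ into $\sum_j \alpha_j |\im z_j|(1+|z_j|^2)^{-1} < \infty$, and the identity $\frac{1+\phi(z)}{1-\phi(z)} = iz$, giving $\re\frac{1+\phi(z)}{1-\phi(z)} = -\im z$, correctly shows that the atom of the singular measure at $w=1$ (the image of $z=\infty$, where the change of variables degenerates) produces exactly the $a\im z$ term in \eqref{eq:si}.
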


\begin{lemma}
\label{lem:ext}
Suppose that $\mu$ is a measure on $\R$ such that $\mu_-$ is a non-zero measure and $(\mu_+ * \mu_-)_+ = 0$. Then $\hat\mu_+$ has a holomorphic extension $\ph$ to the the connected open set
\formula{
 D & = \C \setminus \{z \in \C_- \cup \R : \hat\mu_-(z) = 0\} ,
}
and $\ph$ is a meromorphic function on $\C \setminus \{z \in \R : \hat\mu_-(z) = 0\}$. Furthermore, $\ph \hat\mu_-$ extends to a function which is holomorphic on $\C_-$ and continuous on $\C_- \cup \R$, namely, the characteristic function of $\mu_+ * \mu_-$.
\end{lemma}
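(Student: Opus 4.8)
The plan is to build the extension $\ph$ by gluing $\hat\mu_+$ (living on $\C_+$) to a quotient of the form $g/\hat\mu_-$ (living on $\C_-$) across the part of the real line on which $\hat\mu_-$ does not vanish, where $g$ is the characteristic function of $\mu_+ * \mu_-$. The starting point is the hypothesis $(\mu_+ * \mu_-)_+ = 0$: it says that the measure $\mu_+ * \mu_-$ is carried by $(-\infty, 0]$, so its characteristic function $g = \hat\mu_+ \hat\mu_-$ is holomorphic on $\C_-$ and continuous on $\C_- \cup \R$. Since $\mu_-$ is non-zero, $\hat\mu_-$ is not identically zero; thus its zeros in $\C_-$ are isolated, while its boundary zeros on $\R$ form a closed set, which has Lebesgue measure zero by the boundary uniqueness theorem for bounded holomorphic functions on $\C_-$. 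In particular $\R \cap D = \{x \in \R : \hat\mu_-(x) \ne 0\}$ is open, dense and of full measure in $\R$; together with the fact that the zeros of $\hat\mu_-$ in $\C_-$ are isolated, this makes $D$ connected.

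First I would define $\ph$ on $D$ by setting $\ph = \hat\mu_+$ on $\C_+$ and $\ph = g/\hat\mu_-$ on $(\C_- \cup \R) \cap D$, that is, wherever $\hat\mu_-$ is defined and non-zero. On the overlap $\R \cap D$ the two prescriptions agree, because there $g = \hat\mu_+ \hat\mu_-$ while $\hat\mu_- \ne 0$, so $g/\hat\mu_- = \hat\mu_+$. Hence $\ph$ is a well-defined single-valued function on $D$ that restricts to $\hat\mu_+$ on $\C_+$.

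The heart of the argument is holomorphy across the real axis. On $\C_+$ and on $\C_- \cap D$ the function $\ph$ is patently holomorphic, the second piece being a ratio of holomorphic functions with non-vanishing denominator. For a real point $x$ with $\hat\mu_-(x) \ne 0$, continuity and non-vanishing of $\hat\mu_-$ supply a whole disc around $x$ contained in $D$; on its upper half $\ph = \hat\mu_+$ is continuous up to the diameter, on its lower half $\ph = g/\hat\mu_-$ is continuous up to the diameter, and the two boundary values coincide on the diameter. A Morera--Painlevé gluing argument then yields holomorphy of $\ph$ on the whole disc, and since holomorphy is a local property, $\ph$ is holomorphic on all of $D$. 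I expect this patching step -- justifying that the one-sided boundary continuity of $\hat\mu_+$ and of $g/\hat\mu_-$, together with their agreement on the full-measure open set $\R \cap D$, forces a genuine analytic continuation -- to be the main obstacle, and it is exactly where the boundary regularity of characteristic functions enters.

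It remains to record meromorphy and the final identity. The larger set $\C \setminus \{z \in \R : \hat\mu_-(z) = 0\}$ is obtained from $D$ by re-admitting the zeros of $\hat\mu_-$ in $\C_-$, which are isolated; near such a zero $z_0$ the representation $\ph = g/\hat\mu_-$ has holomorphic numerator, so $\ph$ has at worst a pole there, and is therefore meromorphic on that set. Finally, by construction $\ph \hat\mu_- = g$ throughout $D \cap (\C_- \cup \R)$, and by continuity this identity persists at the isolated $\C_-$-zeros of $\hat\mu_-$; thus $\ph \hat\mu_-$ extends to $g$, the characteristic function of $\mu_+ * \mu_-$, which is holomorphic on $\C_-$ and continuous on $\C_- \cup \R$. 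This gives the last assertion of the lemma.
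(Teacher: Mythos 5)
Your proposal is correct and follows essentially the same route as the paper: define $\ph$ as $\hat\mu_+$ on the closed upper half-plane minus the real zeros of $\hat\mu_-$ and as the quotient of the characteristic function of $\mu_+ * \mu_-$ by $\hat\mu_-$ on the lower half-plane, check agreement on the real overlap, and glue by Morera's theorem; the measure-zero property of the boundary zero set (which the paper extracts from the canonical factorisation theorem) and the isolatedness of the zeros in $\C_-$ give connectedness of $D$ and meromorphy exactly as you describe. No gaps.
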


\begin{proof}
Denote $\nu = \mu_+ * \mu_-$; by the assumption, $\nu = \nu_-$. Let $f = \hat\mu_+$, $g = \hat\mu_-$ and $h = \hat \nu = \hat \nu_-$. Clearly, $h(z) = f(z) g(z)$ for $z \in \R$. Let
\formula{
 A & = \{ z \in \R : g(z) = 0 \} , & B = \{ z \in \C_- : g(z) = 0 \} ,
}
so that $D = \C \setminus (A \cup B)$.

We note basic properties of $A$ and $B$. By continuity of $g$, $A$ and $A \cup B$ are closed sets, and $D$ is an open set. Since $g$ is holomorphic on $\C_-$ (and not identically zero), $B$ is a countable (possibly finite) set with no accumulation points on $\C_-$. By Theorem~\ref{thm:fact}, $A$ has zero Lebesgue measure (as a subset of $\R$). In particular, $D$ is connected.

We define a function $\ph$ on $D$ by the formula
\formula{
 \ph(z) & = \begin{cases} f(z) & \text{if $z \in \C_+ \cup (\R \setminus A)$,} \\[0.5em] \dfrac{h(z)}{g(z)} & \text{if $z \in \C_- \setminus B$.} \end{cases}
}
By definition, $\ph$ is holomorphic both on $\C_+$ and on $\C_- \setminus B$, as well as meromorphic on $\C_-$. Furthermore, $\ph$ is continuous at each point $z \in \R \setminus A$, because both $f$ (defined on $\C_+ \cup \R$) and $h / g$ (defined on $(\C_- \setminus B) \cup (\R \setminus A)$) are continuous at $z$ and $f(z) = h(z) / g(z)$. By a standard application of Morera's theorem, $\ph$ is holomorphic in $D$. It remains to note that $\ph(z) g(z) = h(z)$ for $z \in \C_- \setminus B$.
\end{proof}

\begin{lemma}
\label{lem:wh}
If $\mu$ is a measure on $\R$ such that $(\mu_+^{*n} * \mu_-)_+ = 0$ for all $n = 1, 2, \ldots$\,, then either $\mu_+$ or $\mu_-$ is a zero measure.
\end{lemma}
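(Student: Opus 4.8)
The plan is to prove the contrapositive: assuming that $\mu_-$ is non-zero, I will show that $\mu_+$ must vanish. The case $n=1$ of the hypothesis reads $(\mu_+*\mu_-)_+=0$, so Lemma~\ref{lem:ext} applies and furnishes a holomorphic extension $\ph$ of $\hat\mu_+$ to the connected set $D=\C\setminus(A\cup B)$, where $A=\{x\in\R:\hat\mu_-(x)=0\}$ and $B=\{z\in\C_-:\hat\mu_-(z)=0\}$. The whole strategy is to upgrade $\ph$ to a bounded \emph{entire} function, which by Liouville's theorem is constant, and then to identify that constant as $0$.

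To exploit the hypothesis for every $n$, I would fix $n\ge1$ and apply Lemma~\ref{lem:ext} to the auxiliary measure $\sigma_n=\mu_+^{*n}+\mu_-$, whose positive part is $\mu_+^{*n}$ and whose negative part is $\mu_-$; the hypothesis $(\mu_+^{*n}*\mu_-)_+=0$ together with $\mu_-\ne0$ makes the lemma applicable. Since the domain in Lemma~\ref{lem:ext} depends only on $\hat\mu_-$, it is again $D$, and the resulting extension of $\widehat{\mu_+^{*n}}=(\hat\mu_+)^n$ must coincide with $\ph^n$, by uniqueness of analytic continuation on the connected set $D$. The lemma also tells us that on $\C_-$ the product $\ph^n\hat\mu_-$ equals $\widehat{\mu_+^{*n}*\mu_-}$, the characteristic function of a measure supported in $(-\infty,0]$; such a function is bounded on $\C_-$ by the total variation norm $\norm{\mu_+^{*n}*\mu_-}$, which in turn does not exceed $\norm{\mu_+}^n\norm{\mu_-}$.

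Next I would deduce that $\ph$ itself is bounded. Fixing $z\in\C_-\setminus B$, so that $\hat\mu_-(z)\ne0$, the bound above gives $|\ph(z)|^n\,|\hat\mu_-(z)|\le\norm{\mu_+}^n\norm{\mu_-}$, hence $|\ph(z)|\le\norm{\mu_+}\,(\norm{\mu_-}/|\hat\mu_-(z)|)^{1/n}$; letting $n\to\infty$ yields $|\ph(z)|\le\norm{\mu_+}$. Combined with $|\ph|=|\hat\mu_+|\le\norm{\mu_+}$ on $\C_+$, this shows $\ph$ is bounded on all of $D$. Boundedness makes the isolated singularities of $\ph$ at the points of $B$ removable, so $\ph$ extends to a bounded holomorphic function on $\C\setminus A$. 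The key structural input, already used inside Lemma~\ref{lem:ext} and coming from the Nevanlinna factorisation (Theorem~\ref{thm:fact}), is that $A$ has Lebesgue measure zero. A closed subset of the real line of zero length is removable for bounded holomorphic functions, so $\ph$ extends to a bounded entire function, which is constant by Liouville's theorem. Finally, evaluating $\hat\mu_+$ along the positive imaginary axis gives $\hat\mu_+(it)=\int_{(0,\infty)}e^{-tx}\,\mu_+(dx)\to0$ as $t\to\infty$, because $\mu_+$ assigns no mass to $\{0\}$; hence the constant value of $\ph$ is $0$, so $\hat\mu_+\equiv0$ and therefore $\mu_+=0$, as desired.

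I expect the main obstacle to be the final removability step across the real set $A$: the zeros of $\hat\mu_-$ on $\R$ may form an uncountable closed set, and it is precisely here that one must use that $A$ is null (a consequence of Theorem~\ref{thm:fact}) together with the removability of zero-length sets for bounded holomorphic functions. Everything before that is a fairly mechanical combination of Lemma~\ref{lem:ext} with the elementary geometric growth bound $\norm{\mu_+^{*n}*\mu_-}\le\norm{\mu_+}^n\norm{\mu_-}$.
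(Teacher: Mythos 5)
Your proof is correct, and while it shares the paper's overall skeleton (extend $\hat\mu_+$ to $\ph$ on $D$ via Lemma~\ref{lem:ext}, show $\ph$ is bounded, remove $B$ and then the null set $A$ by Painlev\'e, conclude by Liouville), the heart of the argument --- proving that $\ph$ is bounded on the lower half-plane --- is done by a genuinely different and more elementary route. The paper takes the canonical Blaschke/outer/singular-inner factorisations of $g=\hat\mu_-$ and of $h_n=\ph^n g$ from Theorem~\ref{thm:fact}, deduces that the factorisation data of $\ph^n$ is $n$ times that of $\ph$, and then lets $n\to\infty$ to force the pole set to be empty, the constant $a$ to be non-negative and the singular measure to be non-negative. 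You instead use only the trivial estimates $\lvert\widehat{\mu_+^{*n}*\mu_-}(z)\rvert\le\tnorm{\mu_+}^n\tnorm{\mu_-}$ on $\overline{\C_-}$ (valid because the measure is carried by $(-\infty,0]$) and $\ph^n g=\widehat{\mu_+^{*n}*\mu_-}$ on $\C_-\setminus B$, and extract the bound $\lvert\ph(z)\rvert\le\tnorm{\mu_+}$ pointwise by taking $n$-th roots; note that $\tnorm{\mu_-}/\lvert g(z)\rvert\ge 1$, so the $n$-th root indeed tends to $1$. This replaces the entire inner--outer comparison by a sub-multiplicativity argument; the only residue of Theorem~\ref{thm:fact} you still need is that $A$ has zero Lebesgue measure (non-vanishing a.e.\ of boundary values of the non-trivial bounded function $\hat\mu_-$), which is already invoked inside Lemma~\ref{lem:ext}, plus the same Painlev\'e removability step the paper uses. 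Your identification of the Liouville constant as $0$ via $\hat\mu_+(it)\to 0$ is also fine and matches the paper's concluding contradiction. In short: same scaffolding, but a simpler and self-contained replacement for the paper's central factorisation argument.
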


\begin{proof}
Let $\mu$ be such a measure, and suppose that both $\mu_+$ and $\mu_-$ are non-zero measures. Let $\ph, f, g, h, A, B, D$ be as in the proof of Lemma~\ref{lem:ext}. Clearly, $\ph^n$ is the holomorphic extension of $f^n$, the characteristic function of $\mu_+^{*n}$. An application of Lemma~\ref{lem:ext} to the measure $\mu_+^{*n} + \mu_-$ implies that for all $n = 1, 2, \ldots$\,, the function $\ph^n g$ extends from $\C_- \setminus B$ to a function $h_n$ which is bounded and holomorphic on $\C_-$ and continuous on $\C_- \cup \R$, namely, $h_n$ is the characteristic function of $\mu_+^{*n} * \mu_-$.

Consider the factorisations $g = g_\bb g_\oo g_\si$ and $h_n = h_{n,\bb} h_{n,\oo} h_{n,\si}$ given in Theorem~\ref{thm:fact}, and let $\lambda_g$, $a_g$ and $\lambda_{h,n}$, $a_{h,n}$ denote the corresponding non-negative measures $\lambda$ and constants $a$ for $g$ and $h_n$, respectively. Note that Theorem~\ref{thm:fact} applies both to $g$ and to $h_n = \ph^n g$, as these functions are not identically zero: $f$ and $g$ are characteristic functions of non-zero measures $\mu_+$ and $\mu_-$, while $h_n$ is the product of $g$ and the holomorphic extension of $f^n$.

Recall that $\ph^n = h_n / g$ on $\C_- \setminus B$. It follows that if $\ph_{n,\bb} = h_{n,\bb} / g_\bb$, $\ph_{n,\oo} = h_{n,\oo} / g_\oo$ and $\ph_{n,\si} = h_{n,\si} / g$, then
\formula{
 \ph^n & = \ph_{n,\bb} \ph_{n,\oo} \ph_{n,\si}
}
on $\C_- \setminus B$. Let us examine the above factors in more detail.

By definition, $\ph_{n,\oo}$ and $\ph_{n,\si}$ have no zeros in $\C_-$. This means that if $z_0 \in \C_-$ is a pole of $\ph$ of order $\alpha_0$, then $z_0$ is a pole of $\ph_{n,\bb} = h_{n,\bb} / g_\bb$ of order $n \alpha_0$, and therefore $g_\bb$ has a zero at $z_0$ of multiplicity at least $n \alpha_0$ for all $n = 1, 2, \ldots$\, Since all zeroes of $g_\bb$ have finite multiplicity, $\ph$ has no poles in $\C_-$. In particular, $\ph$ extends to a holomorphic function on $\C \setminus A$, which will be denoted again by $\ph$, and $\ph_{n,\bb} = h_{n,\bb} / g_\bb$ has no poles in $\C_-$. Therefore, the zeros of $h_{n,\bb}$ must cancel the zeros of $g_\bb$, and $\ph_{n,\bb}$ is a Blaschke product.

Since $h_n(x) / g(x) = (f(x))^n$ for $x \in \R \setminus A$ and $A$ has Lebesgue measure zero, we have
\formula{
 |\ph_{n,\oo}(z)| & = \exp \! \expr{\frac{1}{\pi} \int_{-\infty}^\infty \frac{-\im z}{|z - x|^2} \, (\log |h_n(x)| - \log |g(x)|) \, dx} \\
 & = \exp \! \expr{\frac{1}{\pi} \int_{-\infty}^\infty \frac{-\im z}{|z - x|^2} \, \log |f(x)|^n \, dx} .
}
In particular, $\ph_{n,\oo}$ is a bounded outer function, namely, the outer function in the factorisation of the bounded holomorphic function $(\overline{f(\bar{z})})^n$ on the lower complex half-plane.

Finally $\ph_{n,\si}$ is the ratio of two singular inner functions, and hence a singular inner function. If we denote $a_{\ph,n} = a_{h,n} - a_g$ and $\lambda_{\ph,n} = \lambda_{h,n} - \lambda_g$, then
\formula{
 |\ph_{n,\si}(z)| & = \exp \! \expr{-a_{\ph,n} \im z - \frac{1}{\pi} \int_\R \frac{-\im z}{|z - x|^2} \, \lambda_{\ph,n}(dx)} .
}

The above properties imply that $\ph^n$ is of bounded type, and therefore the factors $\ph_{n,\bb}$, $\ph_{n,\oo}$, $\ph_{n,\si}$, the signed measure $\lambda_{\ph,n}$ and the constant $a_{\ph,n} \in \R$ are uniquely determined (up to multiplication by a constant of modulus $1$ in case of $\ph_{n,\oo}$ and $\ph_{n,\ss}$).

By comparing the factorisations of $\ph$ and $\ph^n$, we find that $\ph_{n,\si} = c_n (\ph_{1,\si})^n$ for some constant $c_n$ with modulus $1$. It follows that $a_{\ph,n} = n a_{\ph,1}$ and $\lambda_{\ph,n} = n \lambda_{\ph,1}$. This, however, implies that $a_{\ph,1} = \tfrac{1}{n} a_{\ph,n} \ge - \tfrac{1}{n} a_g$ for all $n = 1, 2, \ldots$\,, and so $a_{\ph,1} \ge 0$. Similarly, the negative part of $\lambda_{\ph,1} = \tfrac{1}{n} \lambda_{\ph,n}$ is dominated by $\tfrac{1}{n} \lambda_g$ for any $n = 1, 2, \ldots$\, This is not possible if the negative part of $\lambda_{\ph,1}$ is non-zero, and therefore $\lambda_{\ph,1}$ is a non-negative measure. We conclude that $\ph = \ph_{1,\bb} \ph_{1,\oo} \ph_{1,\si}$ is a bounded holomorphic function on $\C_-$.

Since $\ph = f$ on $\C_+$ and $f$ is a bounded holomorphic function on $\C_+$, we have proved that $\ph$ is a bounded holomorphic function on $\C \setminus A$. However, $A$ has zero Lebesgue measure (as a subset of $\R$). By Painlevé's theorem (see Theorem~2.7 in~\cite{bib:y15}), $\ph$ extends to a bounded holomorphic function on $\C$. This, in turn, implies that $\ph$ is constant, and so $\hat\mu_+$ is constant, contradicting the assumption that $\mu_+$ is a non-zero measure on $(0, \infty)$.
\end{proof}

\begin{proof}[Proof of Theorem~\ref{thm:wh}]
Suppose that $(\mu^{*n})_+ = (\nu^{*n})_+$ for $n = 1, 2, \ldots$ for some measures $\mu$ and $\nu$ such that $\mu_+$ and $\nu_+$ are non-zero measures. By Lemma~\ref{lem:mupm}, $\mu_+ = \nu_+$ and $(\mu_+^{*n} * \mu_-)_+ = (\nu_+^{*n} * \nu_-)_+$ for $n = 1, 2, \ldots$\, Let $\eta = \mu_+ + \mu_- - \nu_-$, so that $\eta_+ = \mu_+ = \mu_-$ and $\eta_- = \mu_- - \nu_-$. Then $(\eta_+^{*n} * \eta_-)_+ = 0$ for $n = 1, 2, \ldots$\,, and therefore, by Lemma~\ref{lem:wh}, either $\eta_+$ or $\eta_-$ is a zero measure. Since $\eta_+ = \mu_+$ is a non-zero measure, we must have $\eta_- = 0$, that is, $\mu_- = \nu_-$.
\end{proof}

%
%

\section{An error in~\cite{bib:u94}}

In~\cite{bib:u94} an analogue of Theorem~\ref{thm:wh} is given, with equality of $\mu^{*n}$ and $\nu^{*n}$ on $(-\infty, 0)$ rather than on $(0, \infty)$. In page~3001, line~16 of~\cite{bib:u94}, it is claimed that the measures $\mu$ and $\nu$ satisfy condition~(B) of Theorem~A in~\cite{bib:u94}, as a consequence of the results of Section~11.2 in~\cite{bib:lo77}. This reasoning would have been correct if the holomorphic extensions of $\hat{\mu}$ and $\hat{\nu}$ to the upper complex half-plane had been known to be continuous on the boundary. However, this is not verified in~\cite{bib:u94}.

More precisely, it is observed in~\cite{bib:u94} that $\hat{\mu} = (\hat{\chi}_2 - (\hat{\chi}_1)^2) / (2 \hat{\chi}_1)$ almost everywhere on~$\R$, where $\chi_1 = \mu - \nu$ and $\chi_2 = \mu^{*2} - \nu^{*2}$ are measures concentrated on $(0, \infty)$. Since $\hat{\chi}_1$ and $\hat{\chi}_2$ extend to holomorphic functions on $\C_+$, $\hat{\mu}$ extends to a meromorphic function on $\C_+$. Equality of $\mu^{*n}$ and $\nu^{*n}$ on $(-\infty, 0)$ for $n \ge 3$ is used only to show that the extension of $\hat{\mu}$ has no poles in $\C_+$. However, the extension of $\hat{\mu}$ can have singularities near $\R$ and thus fail to satisfy condition~(B) of Theorem~A in~\cite{bib:u94}.

To be specific, observe that $\hat{\mu}(z) = z^2 (z + i)^{-4} \exp(i/z)$ is the characteristic function of a measure $\mu$ on $\R$. Namely, $\mu$ is the convolution of $\tfrac{1}{6} x^3 e^{-x} \ind_{(0, \infty)}(x) dx$ and $\tfrac{1}{6} \, {_0F_1}(4; x) \ind_{(-\infty, 0)}(x) dx - \tfrac{1}{2} \delta_0(dx) - \delta_0'(dx) - \delta_0''(dx)$ (in the sense of distributions; ${_0F_1}$ is the hypergeometric function; we omit the details). Clearly, $\hat{\mu}$ extends holomorphically to the upper complex half-plane, but this extension is not continuous on the boundary, and thus $\mu$ does not satisfy condition~(B) of Theorem~A in~\cite{bib:u94}. Furthermore, $\hat{\mu}(z)$ is the ratio of two characteristic functions of finite measures supported in $[0, \infty)$: $z^4 / (z + i)^8$ and $z^2 (z + i)^{-4} \exp(-i/z)$.

The author of the present article was not able to correct the error in~\cite{bib:u94}. The proof given above uses a related, but essentially different idea.

%
%

\medskip

\subsection*{Acknowledgments}
I thank Loïc Chaumont for numerous discussions on the subject of the article and encouragement. I thank Jan Rosiński for letting me know about reference~\cite{bib:ou90}. I thank Alexander Ulanovski\u{\i} for discussions on references~\cite{bib:ou90,bib:u94}. The main part of this article was written during the \emph{39th Conference on Stochastic Processes and their Applications} in Moscow.

%
%

%
%

\end{document}